\newtheorem*{theorem}{Theorem}
\title{\bf 
A small 6-chromatic \\ two-distance graph in the plane}
\author{\bf \normalsize Jaan Parts} 
\date{\normalsize Kazan, Russia, jaan\_parts@.mail.ru}
\begin{document}

\maketitle

\pagestyle{empty}
\thispagestyle{empty}

\begin{abstract}
We give a new, simple proof for the lower bound of the chromatic number of the Euclidean plane with two forbidden distances, based on a graph with only 16 vertices.
\end{abstract}
	
\section{Introduction}

One of the possible extensions of the classical coloring problem in which it is necessary to find the chromatic number $\chi$ of the Euclidean plane (the minimum number of colors sufficient to color the plane so that any two points unit distance apart have different colors) is to stipulate two forbidden distances $\{1, d\}$ instead of a single one. We will use the notation $\chi (d)$ for this case, implying that one of the forbidden distances is always 1. We can also stipulate that $d \ge 1$, since scaling by $1/d$ also gives the forbidden distance 1.

Since $\chi$ is known to be at least 5 \cite{grey1}, it is of interest to explore whether there is any $d$ for which $\chi (d)\ge 6$. Huddleston \cite{hud} showed that $\chi((\sqrt5+1)/2)\ge 6$. Exoo and Ismailescu \cite{exoo} gave a constructive proof for $d = 2$. P\'alv\"olgyi and \'Agoston \cite{pal} extended this list, using a probabilistic approach, to $d=\sqrt3$ and $d=(\sqrt3+1)/{\sqrt2}$.

\section{The basic construction}

In his proof, Huddleston considers a set of $5^5$ sets of five elements \cite{hud}. On this set of elements, each of which is characterized by coordinates and color, he introduces an operation of addition, which can be interpreted as obtaining a vertex colored in one of five colors. From combinatorial considerations, Huddleston concludes that vertices located at a distance $r =5$ in any 5-coloring have the same color, which implies $\chi(d)\ge 6$. However, he does not explicitly construct the graph, leaving the impression that it may be huge.

\begin{figure}[!b]
\centering
\includegraphics[scale=0.33]{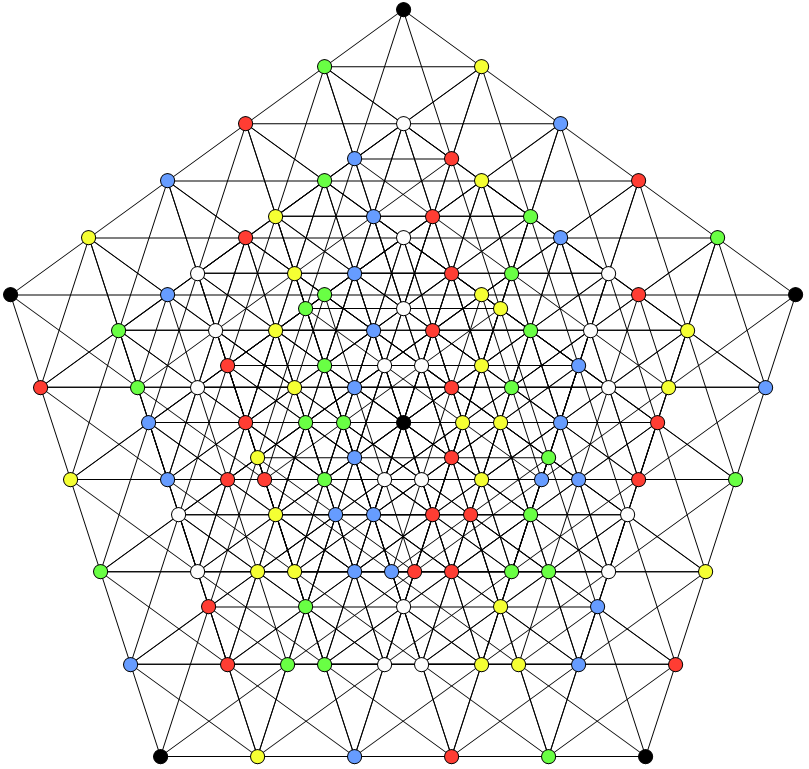}
\caption{The graph $G_{126}$ with monochromatic vertices shown in black.}
\label{g126}
\end{figure}

Corresponding graph was defined explicitly by de Grey \cite{grey}:

$$ G_{126}=G_5 \oplus G_5 \oplus G_5  \oplus G_5  \oplus G_5, $$
$$ G_5= \left\{ \left(R\sin\frac{2\pi k}{5},R\cos\frac{2\pi k}{5} \right) : R=\sqrt{\frac{5+\sqrt5}{10}},\, k\in \mathbb{Z} \right\} .$$

Here $G_5$ is a regular pentagon with side 1 and diagonal $d=(\sqrt5+1)/2$, and $\oplus$ denotes the Minkowski sum. The Minkowski sum of distance graphs is a graph whose vertex set is the union of all possible sums of their vertex coordinates, and the set of edges includes all pairs of obtained vertices for all forbidden distances.
The size of this graph is much smaller than we expected -- it has only 126~vertices, and 350~edges each of length 1 and $d$ (Fig. \ref{g126}). This is less than in $\cite{exoo}$, where the corresponding graph has 214~vertices. In any 5-coloring, the farthest vertices of the graph $G_ {126}$ from the center form monochromatic pairs with a distance $r=5$. There is also a color pattern in the form of monochrome stripes.

In addition, it turns out that the central vertex must have the same color, which gives monochromatic pairs at a distance $r=5R$. We find that this allows a large reduction in the size of the required graph. The graph $G_{16}$ that we have obtained is shown on the right in Fig. \ref{g16}. It is a subgraph of the graph $G_{126}$, has 16~vertices,
a symmetry group of order~48, and 28~edges each of length 1 and $d$, namely:
\begin{itemize}
\itemsep=0pt
    \item
$E(1)=\,$\{$\{1,2\}$, $\{1,3\}$, $\{2,4\}$, $\{2,5\}$, $\{3,4\}$, $\{3,6\}$, $\{4,7\}$, $\{4,8\}$, $\{5,6\}$, $\{5,8\}$, $\{5,9\}$, $\{6,7\}$, $\{6,10\}$, $\{7,9\}$, $\{7,12\}$, $\{7,13\}$, $\{8,10\}$, $\{8,11\}$, $\{8,13\}$, $\{9,11\}$, $\{10,12\}$, $\{11,12\}$, $\{11,14\}$, $\{12,15\}$, \\ $\{13,14\}$, $\{13,15\}$, $\{14,16\}$, $\{15,16\}$\};
    \item 
$E(d)=\;$\{$\{1,5\}$, $\{1,6\}$, $\{2,3\}$, $\{2,6\}$, $\{2,7\}$, $\{2,9\}$, $\{3,5\}$, $\{3,8\}$, $\{3,10\}$, $\{4,9\}$, $\{4,10\}$, $\{4,11\}$, $\{4,12\}$, $\{5,13\}$, $\{6,13\}$, $\{7,10\}$, \\ $\{7,14\}$, $\{8,9\}$, $\{8,15\}$, $\{9,13\}$, $\{9,14\}$, $\{10,13\}$, $\{10,15\}$, $\{11,15\}$, $\{11,16\}$, $\{12,14\}$, $\{12,16\}$, $\{14,15\}$\}.
\end{itemize}

\begin{figure}[!b]
{
\centering
\begin{tabular}{cp{0.5cm}c}
    \includegraphics[scale=0.48]{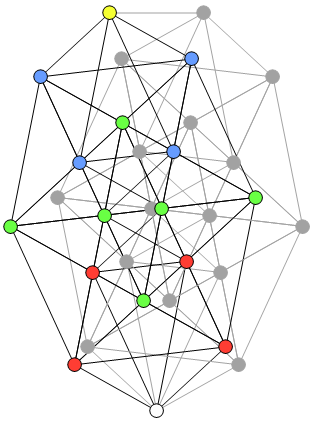} & &
    \includegraphics[scale=0.48]{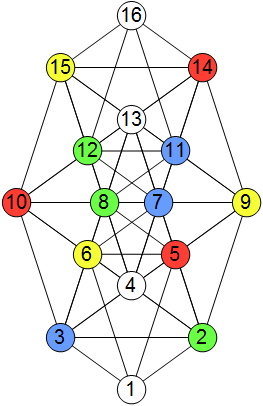} \\
    $G_{31}$ && $G_{16}$ 
\end{tabular} \par
}
    \caption{A small 6-chromatic two-distance graph, and its half-graph.}
    \label{g16}
\end{figure}

This leads to a new simple proof of the following

\newpage

\begin{theorem}
{\upshape \cite{hud}} $\chi((\sqrt5+1)/2)\ge 6$.
\end{theorem}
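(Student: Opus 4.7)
The plan is to establish (i) that the 16-vertex graph $G_{16}$ listed above is realized in the plane as a two-distance graph with distances $1$ and $d=(\sqrt5+1)/2$, and (ii) that $G_{16}$ has chromatic number at least~$6$. Combining (i) and (ii) at once yields $\chi(d)\ge\chi(G_{16})\ge 6$.

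For (i), I would exploit the inclusion $G_{16}\subset G_{126}$ together with the explicit Minkowski-sum formula for $G_{126}$. Each of the 16 selected vertices receives a concrete coordinate pair in $\mathbb{Q}(\sqrt5)$, and it is then routine arithmetic to verify that every pair in $E(1)$ is at Euclidean distance $1$, every pair in $E(d)$ at distance $d$, and no further pair of vertices attains either of these distances.

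For (ii), I would argue by contradiction: suppose a proper 5-coloring of $G_{16}$ exists, and push it to a contradiction by a case analysis. The automorphism group of order $48$ lets us fix, without loss of generality, the colors of a single orbit representative at the outset, for instance the degree-4 vertex $1$ and one of its neighbors. From there one propagates using the local structure: every triangle of $G_{16}$ forces three distinct colors, and there are many mixed triangles coming from the underlying pentagons (for example $\{2,3,5\}$, whose edges have types $d,1,d$). The six high-degree vertices $\{4,7,8,9,10,13\}$ each see roughly half the graph, so once a portion of their neighborhood is colored their own color is pinned down or the branch dies. I would expect the branch-and-prune tree to close out with no surviving 5-coloring.

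The main obstacle is not whether the contradiction exists but how compactly the case analysis can be presented. The raw search space of $5^{16}$ colorings is far too large to inspect by hand, and the art of the proof is to combine the order-48 symmetry reduction with the propagations forced by the mixed triangles and by the degree-8 vertices to cut this down to a short, human-readable list of cases. If the hand presentation still proves unwieldy, the same argument is immediately completed by a routine backtracking check, which at this size is entirely tractable.
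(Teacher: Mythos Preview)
Your plan contains a genuine error at step~(ii): the graph $G_{16}$ is \emph{not} 6-chromatic. It admits proper 5-colorings, so the contradiction you propose to derive never materializes; a backtracking search will terminate with valid 5-colorings rather than an empty tree. What is actually true of $G_{16}$---and this is the paper's key lemma---is the weaker statement that in \emph{every} proper 5-coloring the two extreme vertices $1$ and $16$ receive the same color. That lemma is established by a short hand argument: the 5-clique $\{1,2,3,5,6\}$ fixes five colors; then among the six middle vertices $\{4,7,8,9,10,13\}$ exactly the three non-edges $\{4,13\},\{7,8\},\{9,10\}$ are available, and a pigeonhole count forces one of these pairs to be monochromatic in the color of vertex~$1$; that pair blocks all of $\{11,12,14,15\}$ from using that color, whence vertex~$16$ must use it.

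The missing ingredient is a spindle-type step: take two copies of $G_{16}$ sharing vertex~$1$ and rotated so that the two images of vertex~$16$ lie at distance~$1$ (or~$d$). This yields a 31-vertex graph $G_{31}$. In any 5-coloring of $G_{31}$ both images of~$16$ would have to match the shared vertex~$1$, hence match each other, contradicting the edge between them. It is $G_{31}$, not $G_{16}$, that has chromatic number~$6$, and your step~(ii) must be replaced by the monochromatic-pair lemma together with this doubling construction.
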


\begin{proof}
Consider the 31-vertex graph $G_{31}$, shown on the left in Fig. \ref{g16} and formed from two copies of $G_{16}$ rotated around a common vertex 1 by the angle $\arccos((95+\sqrt5)/100)$, which gives an additional edge of unit length between the copies of vertex 16 (alternatively, you can take the angle $\arccos((95-\sqrt5)/100)$ giving an additional edge of length $d$). This forces at least one of the two copies of vertex 16 to be a different colour than the common vertex 1. Graph $G_ {16}$ contains 5-cliques, so at least 5 colors are needed.
Thus it remains only to show that, in any 5-colouring of $G_{16}$, vertices 1 and 16 are the same colour.

1. We break the set of vertices into the following subsets: $\{1\}$, $\{2, 3, \\5, 6\}$, $\{4, 7, 8, 9, 10, 13\}$, $\{11, 12, 14, 15\}$, $\{16\}$.

2. Since the vertices $\{1, 2, 3, 5, 6\}$ form a 5-clique, we assign different colors to these vertices: 1-white, 2-green, 3-blue, 5-red, 6-yellow.

3. We notice that the set $\{4, 7, 8, 9, 10, 13\}$ has only three independent subsets (i.e., non-edges): $\{4, 13\}$, $\{7, 8\}$ and $\{9, 10\}$. We also note that the colors {green, blue, red, yellow} can be used in $\{4, 7, 8, 9, 10, 13\}$ only once. This means that one of $\{4, 13\}$, $\{7,  8\}$, $\{9, 10\}$ must have both of its vertices colored white.

4. This forbids all vertices of the set $\{11, 12, 14, 15\}$ to be white.

5. This forces vertex 16 to be white, i.e. the same colour as vertex 1.

\end{proof}

\section{A small addition}

Additionally, we identify a new graph $G_{199}$ with a monochromatic pair at a distance of $r=5/\sqrt3$ in a 5-coloring with forbidden distances $\{1, 2\}$, which leads to a 397-vertex 6-chromatic two-distance graph. The graph $G_{199}$ has 199~vertices, 870~edges of length 1 and 273 edges of length 2, a symmetry group of order 12 (see Fig.\ref{g199}). It is obtained by reducing a 313-vertex graph, which in turn is obtained from the graph $G_{19} \oplus G_{19} \oplus G_{7}$ by discarding vertices that do not fit inside the circle of radius $\sqrt3$, where $G_7$ is the 7-vertex hexagonal wheel graph and $ G_{19}=\bigcup_{k=-1}^1 G_7\,\exp(\frac{ik}{2}\arccos(\frac56))$. We also tried a monochromatic pair with a distance $r=5/3$, but it leads to a larger graph after reduction.

\begin{figure}[!t]
\centering
\includegraphics[scale=0.33]{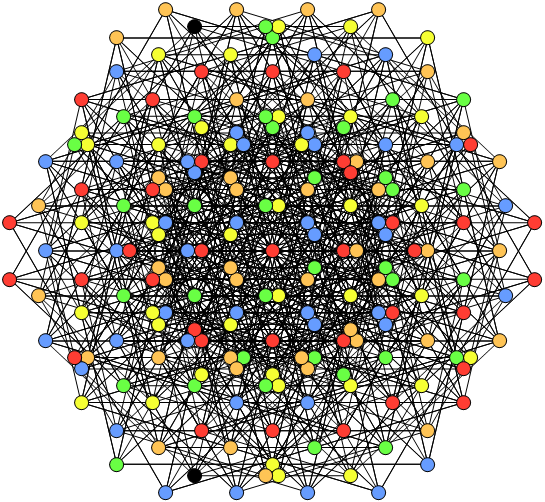}
\caption{A 5-coloring of $G_{199}$. A monochromatic pair of vertices $(-1/2, \pm5/\sqrt{12})$ is shown in black.}
\label{g199}
\end{figure}

\newpage
\section{Acknowledgements}

I thank Aubrey de Grey for cleaning up this article and clarifying the proof of Huddleston, Geoffrey Exoo and Dan Ismailescu for digging this proof, Alexander Soifer for great patience and Pink Floyd for the music in my head.

\end{document}